\newtheorem{thm}{Theorem}
\theoremstyle{plain}
\newtheorem{theo}[thm]{Theorem}
\newtheorem{lem}[thm]{Lemma}
\theoremstyle{definition}
\numberwithin{equation}{section}
\def\Cervonenkis{\v{C}ervonenkis}
\def\sq{\square}
\def\zz{\mathbb Z}
\def\rr{\mathbb R}
\def\al{\alpha}
\def\T{\mathbf{T}}
\def\sseq{\subseteq}
\def\wt{\widetilde}
\def\<{\langle}
\def\>{\rangle}
\def\Z{ {\text {\rm Z} } }
\def\0{{\mathbf 0}}
\def\.{\hskip.06cm}
\def\ts{\hskip.03cm}
\def\poly{\textup{\textsf{P}}}
\def\Z{\mathbb{Z}}
\def\R{\mathbb{R}}
\def\T{\mathcal{T}}
\newcommand{\cj}[1]{\overline{#1}}
\renewcommand{\b}{\cj{b}}
\newcommand{\x}{\mathbf{x}}
\renewcommand{\u}{\mathbf{u}}
\def\vv{\mathbf{v}}
\newcommand{\y}{\mathbf{y}}
\newcommand{\z}{\mathbf{z}}
\newcommand{\ex}{\exists\ts}
\renewcommand{\for}{\forall\ts}
\def\proj{\textup{proj}}
\def\NP{{\textup{\textsf{NP}}}}
\def\AP{\textup{AP}}
\def\VC{\textup{VC}}
\def\vc{\textup{vc}}
\def\pat{\bm}
\def\T{\mathcal{T}}
\def\fam{\mathcal{S}}
\def\lang{\mathcal{L}}
\def\model{\bm M}
\def\PA{\textup{PA}}
\def\SPA{\textup{Short-PA}}
\title{VC-dimension of short Presburger formulas}
\author[Danny Nguyen \and Igor Pak]{Danny Nguyen$^{\star}$ \and Igor~Pak$^{\star}$}
\thanks{\thinspace ${\hspace{-.45ex}}^\star$Department of Mathematics,
UCLA, Los Angeles, CA, 90095.
\hskip.06cm
Email:
\hskip.06cm
\texttt{\{ldnguyen,\ts{pak}\}@math.ucla.edu}}
\thanks{
\today}
\begin{document}
\maketitle

\begin{abstract}
We study VC-dimension of \emph{short formulas} in Presburger Arithmetic,
defined to have a bounded number of variables, quantifiers and atoms.
We give both lower and upper bounds, which are tight up to a polynomial
factor in the bit length of the formula.
\end{abstract}

\ \ {}

\vskip.7cm

\section{Introduction}

The notion of $\VC$-dimension was introduced by Vapnik and \v{C}ervonenkis in~\cite{VC}.
Although originally motivated by applications in probability and statistics,
it was quickly adapted to computer science, learning theory, combinatorics, logic and other areas.
We refer to~\cite{Vap} for the extensive review of the subject, and to~\cite{Che} for
an accessible introduction to combinatorial and logical aspects.

\subsection{Definitions of VC-dimension and VC-density}
Let $X$ be a set and $\fam \subseteq 2^{X}$ be a family of subsets in $X$.
A subset $A \subseteq X$ is \emph{shattered} by $\fam$ if for every subset
$B \subseteq A$, there is $S \in \fam$ with $B = S \cap A$.
In other words, every subset of $A$ can be cut out by $\fam$.
The largest size of a subset $A \subseteq X$ shattered by $\fam$ is called the
\emph{VC-dimension} of $\fam$, denoted by $\VC(\fam)$.
If no such largest size $|A|$ exists, we write $\VC(\fam)=\infty$.


The \emph{shatter function} is defined as follows:
$$
\pi_{\fam}(n) \. = \. \max\big\{|\fam \cap A| : A \sseq X, |A| = n\big\}\ts,
$$
where $|\fam \cap A|$ denotes the number of subsets $B \sseq A$ which can be cut out by $\fam$.
The \emph{VC-density} of $\fam$, denoted by $\vc(\fam)$ is defined as
$$
\inf \Big\{r \in \rr^{+} \; : \; \textup{limsup}_{n \to \infty} \frac{\pi_{\fam}(n)}{n^r} < \infty  \Big\}\ts.
$$
The classical theorem of Sauer and Shelah~\cite{Sauer,Shelah} states that
$$\vc(\fam) \. \le \. \VC(\fam)\ts.$$
In other words, $\pi_{\fam}(n) = O(n^{d})$ in case $\fam$ has finite VC-dimension~$d$.
In general, VC-density can be much smaller than VC-dimension, and also behaves a lot better
under various operations on~$\fam$.

\subsection{NIP theories and bounds on VC-dimension/density}
It is of interest to distinguish the first-order theories in which VC-dimension and VC-density behave nicely.
Let $\lang$ be a first-order language and $\model$ be an $\lang$-structure.
Consider a \emph{partitioned $\lang$-formula} $F(\x; \ts\y)$ whose free variables are separated into two groups $\x \in M^{m}$ (objects) and $\y \in M^{n}$ (parameters).
For each parameter tuple $\b \in M^{n}$, let
$$
S_{\b} \. = \.\big\{\x \in M^{m} : \model \models F(\x;\ts \b)\big\}.
$$
Associated to $F$ is the family $\fam_{F} = \big\{S_{\b} : \b \in M^{n}\big\}$.
We say that $F$ is NIP, short for ``$F$ does \emph{not} have the independence property'', if $\fam_{F}$ has finite VC-dimension.
The structure~$\model$ is called NIP if every partitioned $\lang$-formula $F$ is NIP in~$\model$.

One prominent example of an NIP structure is \emph{Presburger Arithmetic} $\PA = (\zz, < , +)$,
which is the first-order structure on $\zz$ with only addition and inequalities.
The main result of this paper are the lower and upper bounds on the VC-dimensions of $\PA$-formulas.
These are contrasted with the following notable bounds on the VC-density:

\begin{theo}[\cite{A+}]\label{th:A+}
Given a $\PA$-formula $F(\x;\ts \y)$ with $\y \in \zz^{n}$, $\vc(\fam_{F}) \le n$ holds.
\end{theo}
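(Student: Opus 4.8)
\emph{Proof sketch.} The plan is to combine quantifier elimination for Presburger Arithmetic with the classical bound on the number of sign patterns of a family of affine forms. Recall that $\PA$ admits quantifier elimination once the language $(\zz,<,+)$ is expanded by the congruence relations $\equiv_{N}$ for $N\ge 2$. Applying this to $F$, we obtain an equivalent quantifier-free formula $G(\x;\ts\y)$ which is a Boolean combination of atoms, each equivalent to one of $L(\x,\y)\ge 0$, $L(\x,\y)=0$, or $L(\x,\y)\equiv c\mod{N}$, where each $L$ is an integer affine-linear function of the joint tuple $(\x,\y)$. The point we will exploit is \emph{uniformity}: the finite list $L_{1},\dots,L_{k}$ of affine forms, the finite set of moduli $N_{1},\dots,N_{k}$, and hence $M:=\mathrm{lcm}(N_{1},\dots,N_{k})$, depend only on $F$ and not on any parameters.

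Next I would fix $t$ object points $\mathbf a_{1},\dots,\mathbf a_{t}\in\zz^{m}$ witnessing $\pi_{\fam_{F}}(t)$, and bound the number of distinct traces $\{\,i : \PA\models F(\mathbf a_{i};\ts\b)\,\}$ as $\b$ ranges over $\zz^{n}$. Substituting $\x=\mathbf a_{i}$ turns an atom $L_{j}(\x,\y)\ge 0$ into $\lambda_{j}(\y)+c_{i,j}\ge 0$, where $\lambda_{j}$ is the ($i$-independent) linear part of $L_{j}$ in $\y$ and $c_{i,j}=L_{j}(\mathbf a_{i},0)$; and it turns a congruence atom into $\lambda_{j}(\y)\equiv c-c_{i,j}\mod{N_{j}}$, whose truth depends on $\y$ only through $\y\bmod M$. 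Hence the full trace at a point $\b$ is determined by the following bounded data: (a) for each $i\le t$ and $j\le k$, the side of the affine hyperplane $H_{i,j}=\{\y:\lambda_{j}(\y)+c_{i,j}=0\}$ on which $\b$ lies (``on'' included); and (b) the residue $\b\bmod M$.

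It then remains to count how many such data can occur. The hyperplanes $\{H_{i,j}\}_{i\le t,\,j\le k}$ number at most $kt$, and the map sending $\b\in\zz^{n}$ to its vector of signs relative to these hyperplanes factors through the analogous map on $\rr^{n}$; so it is enough to bound the number of distinct sign vectors realized by an arrangement of at most $kt$ hyperplanes in $\rr^{n}$, which is $O\big((kt)^{n}\big)$ by the classical arrangement bound. Multiplying by the at most $M^{n}$ residue classes modulo $M$ gives
\[
\pi_{\fam_{F}}(t)\ \le\ M^{n}\,\cdot\,O\big((kt)^{n}\big)\ =\ O\big(t^{n}\big),
\]
since $k$ and $M$ are constants depending only on $F$. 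By the definition of VC-density this yields $\vc(\fam_{F})\le n$.

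Since the argument is short, there is no single hard step, but several places need care, all of them about uniformity and bookkeeping. One must check that quantifier elimination outputs only boundedly many atoms, affine forms and moduli, independently of the shattering configuration — which is exactly why it is essential that the $\y$-part $\lambda_{j}$ of each atom not depend on $\mathbf a_{i}$, so that the hyperplanes fall into $k$ parallel pencils and the congruences into residue classes modulo a fixed $M$. One should also verify that restricting from real sign patterns to those realized over $\zz^{n}$, and counting faces of all dimensions rather than only full-dimensional cells, each costs at most a constant factor. Finally, it is worth noting that having several object variables $x_{1},\dots,x_{m}$ creates no difficulty and needs no reduction to a single object variable: the object point enters the atoms only through the constant terms $c_{i,j}$, so the exponent stays $n$ regardless of $m$.
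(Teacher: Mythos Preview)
The paper does not supply its own proof of this statement: Theorem~\ref{th:A+} is quoted from \cite{A+}, and the surrounding text explicitly defers to that reference, noting only that the bound is a special case of a general result for quasi-o-minimal structures. So there is no in-paper argument to match against.

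Your sketch, on the other hand, is a correct and self-contained route tailored to $\PA$. Quantifier elimination in the language expanded by the congruences $\equiv_{N}$ reduces $F$ to a fixed Boolean combination of finitely many linear atoms and congruences; the crucial observation you isolate --- that after substituting an object point $\mathbf a_{i}$ the $\y$-linear part $\lambda_{j}$ of each atom is unchanged --- means the resulting $kt$ hyperplanes in $\rr^{n}$ fall into $k$ parallel pencils, and the classical arrangement bound gives $O\big((kt)^{n}\big)$ sign patterns; the congruence atoms contribute only the finitely many residues in $(\zz/M\zz)^{n}$. Multiplying yields $\pi_{\fam_{F}}(t)=O(t^{n})$ and hence $\vc(\fam_{F})\le n$. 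The caveats you list (bounded output of QE, counting faces of all dimensions rather than open cells, passing from $\rr^{n}$ to $\zz^{n}$) are the right ones and each is routine.

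By contrast, the argument in \cite{A+} is model-theoretic and far more general: it proves the $\vc\le n$ bound uniformly for all quasi-o-minimal theories (and several other classes) via a type-counting framework, with $\PA$ recovered as a special case. Your approach buys simplicity and explicitness; theirs buys scope.
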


In other words, VC-density in the setting of PA depends only on the dimension of parameter variables~$\y$,
and thus completely independent of the object variables $\x$, let alone other quantified variables or
the description of~$F$.  This follows from a more general result in~\cite{A+}, which says that every
\emph{quasi-o-minimal} structure satisfies a similar bound on the VC-density.  We refer to~\cite{A+} for the
precise statement of this result and for the powerful techniques used to bound the VC-density.

Karpinski and Macintyre raised a natural question whether similar bounds would hold for the VC-dimension.
In~\cite{KM1}, they gave upper bounds for the VC-dimension in some \emph{o-minimal} structures (PA is not one),
which are polynomial in the parameter dimension $n$.  Later, they extended their
arguments in~\cite{KM2} to obtain upper bounds on the \emph{VC-density}, this time linear in $n$.
To our knowledge, no effective upper bounds on the VC-dimensions of general PA-formulas exist in the literature.\footnote{We were informed by Matthias Aschenbrenner that in~\cite{KM2}, the authors claimed to have an effective bound on the VC-dimensions of PA-formulas.
However, we cannot locate such an explicit bound in any papers.
}

\subsection{Main results}
We consider PA-formulas with a fixed number of variables (both quantified and free).
Clearly, this also restricts the number of quantifier alternations in $F$.
The atoms in $F$ are linear inequalities in these variables with some integer constants and coefficients (in binary).
Given such a formula~$F$, denote by $\phi(F)$ the length of $F$, i.e.,
the total bit length of all symbols, operations, integer coefficients and constants
in~$F$.


We can further restrict the form of a PA-formula by requiring that it does not contain too many inequalities.
For fixed $k$ and $t$, denote by $\SPA_{k,t}$ the family of PA-formulas with at most $k$ variables (both free and quantified) and $t$ inequalities.
When $k$ and $t$ are clear, a formula $F \in \SPA_{k,t}$ is simply called a \emph{short Presburger formula}.
In this case, $\phi(F)$ is essentially the total length of  a bounded number of integer coefficients and constants.
Our main result is a lower bound on the VC-dimension of short Presburger formulas:

\begin{theo}\label{th:lower-nd}
For every $d$, there is a short Presburger formula $F(x;y) = \ex \u \; \for \vv \; \Psi(x,y,\u,\vv)$ in the class $\SPA_{10,18}$
with
\begin{equation*}
\phi(F)\.  = \. O(d^2) \quad \, \text{and} \quad \, \VC(F) \ge d\ts.
\end{equation*}
Moreover, $\Psi$ can be computed in probabilistic polynomial time in~$d$.
Here $x,y$ are singletons and $\u \in \zz^{6},\vv \in \zz^{2}$.
\end{theo}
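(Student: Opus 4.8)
By definition $\VC(F)\ge d$ means there is a $d$-element set $A=\{a_{1},\dots,a_{d}\}\ssu\zz$ that is shattered by $\fam_{F}=\{S_{b}:b\in\zz\}$, where $S_{b}=\{x\in\zz:\PA\models F(x;b)\}$. So the plan is: (i) fix once and for all the quantifier shape $F(x;y)=\ex\u\,\for\vv\,\Psi(x,y,\u,\vv)$ with $\u\in\zz^{6}$, $\vv\in\zz^{2}$ and $\Psi$ a Boolean combination of at most $18$ linear inequalities; (ii) produce the $d$ points $a_{1},\dots,a_{d}$ and, for every $B\sseq\{1,\dots,d\}$, a parameter value $b_{B}\in\zz$ so that $\PA\models F(a_{i};b_{B})$ holds exactly when $i\in B$. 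Since distinct $B$ then yield distinct traces $S_{b_{B}}\cap A$, and all $2^{d}$ traces occur, this forces $\VC(F)\ge d$; the bound $\phi(F)=O(d^{2})$ will be charged entirely to one large integer constant baked into $\Psi$.

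\textbf{Encoding subsets by residues.}
I would encode a subset through the residues of a single integer modulo $d$ coprime moduli. Choose, by random sampling plus primality testing in $\polyin(d)$ time, distinct primes $p_{1}<\dots<p_{d}$ of $\Theta(d)$ bits, and set $N=p_{1}\cdots p_{d}$, so $\log_{2}N=O(d^{2})$; this $N$ is the single large coefficient of $\Psi$ and accounts for $\phi(F)=O(d^{2})$. For $B\sseq\{1,\dots,d\}$ let $b_{B}\in\{0,1,\dots,N-1\}$ be the Chinese-remainder solution of $b_{B}\equiv[i\in B]\mod{p_{i}}$ for all $i$; these are pairwise distinct. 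Take as shattered point $a_{i}=N/p_{i}=\prod_{j\ne i}p_{j}$, which is divisible by $p_{j}$ for $j\ne i$ and a unit modulo $p_{i}$. A one-line Chinese-remainder computation then gives, for these special values,
\[
i\in B\ \Longleftrightarrow\ p_{i}\nmid b_{B}\ \Longleftrightarrow\ N\nmid a_{i}\,b_{B},
\]
so it remains to design $\Psi$ so that $F(x;y)$ expresses ``$N\nmid x\,y$'' on the domain $x\in\{a_{1},\dots,a_{d}\}$, $y\in\{b_{B}:B\sseq\{1,\dots,d\}\}$ — and we are free to also fix the bit-lengths of the $a_{i}$ and $b_{B}$ as convenient.

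\textbf{The core lemma: a variable-modulus test in short $\exists\forall$ PA.}
This is the heart of the proof and the main obstacle: $x\,y$ is a product of two non-constant quantities, hence not a Presburger term, so ``$N\nmid x\,y$'' cannot be written naively (note that $q\,N$, with the \emph{constant} modulus $N$, is perfectly legal). The plan is to use the standard short-Presburger device of a guessed certificate that is verified against fixed constants. The existential block $\u\in\zz^{6}$ carries a witness for the Euclidean division of $x\,y$ by $N$ — the quotient $q$, the remainder $r$, together with the binary- or continued-fraction-type data needed to certify a single multiplication by the fixed modulus — while the universal block $\vv\in\zz^{2}$ ranges over one challenge that forces this certificate to be the correct one, all through purely additive relations involving $N$ and a bounded number of fixed small constants, without ever forming $x\,y$ itself; one then asserts $r\ne 0$. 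Restricting $x,y$ to the special, length-controlled values constructed above is exactly what keeps the certificate down to six coordinates and the checking formula down to eighteen atoms. After the usual elimination of bounded quantifiers and replacement of each equation by two inequalities, checking that the resulting formula indeed lies in $\SPA_{10,18}$ ($1+1+6+2=10$ variables, at most $18$ inequalities) is a careful but routine bookkeeping, and $\Psi$ is computed from $p_{1},\dots,p_{d}$ (equivalently from $N$) in deterministic polynomial time, so the only randomness is in generating the primes, which gives the ``probabilistic polynomial time in $d$'' clause.

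\textbf{Conclusion.}
Granting the core lemma, for $x=a_{i}$ and $y=b_{B}$ we get $\PA\models F(a_{i};b_{B})$ iff $N\nmid a_{i}b_{B}$ iff $p_{i}\nmid b_{B}$ iff $i\in B$; hence $S_{b_{B}}\cap A=\{a_{i}:i\in B\}$, every subset of $A$ is cut out, $A$ is shattered, and $\VC(F)\ge d$ with $\phi(F)=O(d^{2})$. The one delicate point throughout is the short $\exists\forall$ realization of the variable-modulus divisibility test within the tight budget of ten variables and eighteen inequalities; the rest is the Chinese-remainder set-up, the prime generation, and routine normalization of the formula.
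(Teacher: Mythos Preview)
Your Chinese-remainder encoding is neat, but the proof has a genuine gap at exactly the point you flag as ``the heart of the proof'': the core lemma that ``$N\nmid x\,y$'' (or anything agreeing with it on your $a_i,b_B$) is expressible by an $\exists\forall$ formula in $\SPA_{10,18}$. You never construct such a $\Psi$; you invoke a ``standard short-Presburger device'' and ``binary- or continued-fraction-type data needed to certify a single multiplication,'' but no such standard device exists. Multiplication of two \emph{free} variables is not Presburger-definable, and the known short-formula tricks (e.g.\ the continued-fraction encoding of \cite{NP2}, or Fischer--Rabin's $\text{Prod}_\ell$) do something quite different: the former encodes unions of arithmetic progressions in a \emph{single} variable, and the latter uses a number of quantified variables growing with the bit-length, far beyond your budget of~$8$. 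Saying that ``restricting $x,y$ to the special, length-controlled values'' keeps the certificate to six coordinates is not an argument: the formula $F(x;y)$ must be a fixed Presburger formula whose semantics are determined on all of~$\zz^2$, and I see no mechanism by which six existential integers and two universal ones can verify that a guessed $(q,r)$ satisfies $x\,y=qN+r$ using only linear constraints in $x,y,q,r$ and the constant~$N$. As written, the ``routine bookkeeping'' is doing all of the work.

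The paper sidesteps precisely this obstacle. Its parametrized formula is $F(x;y)=G_T(x+dy)$, so the free variables enter only through the \emph{linear} term $x+dy$, and the entire burden is shifted to building a one-variable short formula $G_T(t)$ for the set $T=\bigsqcup_{j<2^d}\{i+dj:i\in S_j\}$, where $S_0,\dots,S_{2^d-1}$ lists all subsets of $\{1,\dots,d\}$. The paper then rewrites $T$ (after a change of variables) as a union of $d$ arithmetic progressions and invokes the continued-fraction encoding of \cite{NP2} to define that union with the required $\exists\u\,\forall\vv$ shape; the probabilistic step is choosing a prime $p>\max_i m_i\approx d\cdot 4^d$ to replace the deterministic $M=1+\prod_i m_i$, which is what drops $\phi(F)$ from $O(d^3)$ to $O(d^2)$. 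If you want to salvage your approach, you would need an honest construction replacing your core lemma; otherwise, the linear combination $x+dy$ together with a one-variable AP-union encoding is the missing idea.
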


So in contrast with VC-density, the VC-dimension of a PA-formula $F$ crucially depends on the actual length $\phi(F)$.
For the formulas in the theorem, we have:
$$
\VC(F) \. = \. \Omega\bigl(\phi(F)^{1/2}\bigr)\., \quad \text{and} \quad \vc(F) \le 1,
$$
where the last inequality follows by Theorem~\ref{th:A+}.
Note that if one is allowed an unrestricted number of inequalities in~$F$,
a similar lower bound to Theorem~\ref{th:lower-nd} can be easily established by an elementary combinatorial argument.
However, since the formula $F$ is short, we can only work with a few integer coefficients and constants.


The construction in Theorem~\ref{th:lower-nd} uses a number-theoretic technique that employs continued fractions to encode a union of many arithmetic progressions.
This technique was explored earlier in~\cite{NP2} to show that various decision problems with short Presburger sentences are intractable.
In this construction we need to pick a prime roughly larger than $4^{d}$, which can be done in probabilistic polynomial time in $d$.
This can be modified to a deterministic algorithm with run-time polynomial in~$d$, at the cost of increasing $\phi(F)$:

\begin{theo}\label{th:lower}
For every $d$, there is a short Presburger formula $F(x;y) = \ex \u \; \for \vv \; \Psi(x,y,\u,\vv)$ in the class $\SPA_{10,18}$
with
$$
\phi(F)\.  = \. O(d^3) \quad \, \text{and} \quad \, \VC(F) \ge d\ts.
$$
Moreover, $\Psi$ can be computed in deterministic polynomial time in~$d$.
\end{theo}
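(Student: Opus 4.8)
The plan is to reuse the construction of Theorem~\ref{th:lower-nd} and to remove its single use of randomness, paying for this with a polynomial increase in $\phi(F)$. Recall that there the formula $F(x;y) = \ex \u \; \for \vv \; \Psi(x,y,\u,\vv)$ is assembled from a continued-fraction encoding whose coefficients are computed from a prime $p$ with $p \gtrsim 4^{d}$, and that the only randomized step is the production of such a $p$: sample an $O(d)$-bit integer, apply a primality test, and repeat. Everything else --- the set of $d$ integers that gets shattered, and the remaining constants and coefficients of $\Psi$ --- is already produced by a deterministic $\polyin(d)$-time procedure, and the bound $\VC(F) \ge d$ does not depend on how $p$ is chosen. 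So it suffices to give a deterministic $\polyin(d)$-time procedure that outputs an integer which can play the role of $p$, and then to bound $\phi(F)$.

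The first and crucial task is to extract from the proof of Theorem~\ref{th:lower-nd} the exact property of $p$ that the encoding relies on. One cannot simply ask for ``a prime of bit length $O(d)$'': producing a prime of prescribed magnitude deterministically in polynomial time is a well-known open problem, and allowing the prime to have bit length $\polyin(d)$ does not help, since the best unconditional deterministic prime searches still take time exponential in $d$. I therefore expect --- and checking this is the heart of the matter --- that the encoding uses only that $p$ be large, say $p \gtrsim 4^{d}$, together with a coprimality condition against a fixed, explicitly listed set of integers of bit length $O(\polyin(d))$ (and possibly squarefreeness), rather than primality in any field-theoretic sense. Granting this, one may take $p$ to be a product of distinct primes chosen away from the finitely many forbidden ones: by Chebyshev's bound $\prod_{q\le x,\ q\ \mathrm{prime}} q \ge 2^{cx}$, already $x = O(d)$ makes the product exceed $4^{d}$, and all primes up to $x$ are found by trial division in $\polyin(d)$ time. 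The slack one must build into this deterministic choice of modulus in order to meet all the constraints at once inflates its bit length beyond the $O(d)$ bits of the random prime, and this is what accounts for the weaker bound $\phi(F) = O(d^{3})$ in place of the $O(d^{2})$ of Theorem~\ref{th:lower-nd}.

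Finally, one re-runs the argument of Theorem~\ref{th:lower-nd} verbatim with this deterministically chosen integer in place of the random prime: the same $d$ integers are shattered, the formula still lies in $\SPA_{10,18}$ with $x,y$ singletons, $\u \in \zz^{6}$ and $\vv \in \zz^{2}$, every coefficient of $\Psi$ is computed deterministically in $\polyin(d)$ time, and the bit-length bookkeeping gives $\phi(F) = O(d^{3})$. The genuinely delicate point is the middle one: one must confirm that the continued-fraction encoding of Theorem~\ref{th:lower-nd} never uses more about $p$ than a lower bound on its size together with such a coprimality condition, so that the substitution is harmless. If, on the contrary, primality enters essentially, then the construction would have to be reorganized so as to depend only on such a weaker, efficiently attainable property of $p$, since a direct deterministic hunt for a large prime is out of reach; the remaining estimates are routine.
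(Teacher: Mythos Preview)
Your central diagnosis is correct: the continued-fraction encoding behind Theorem~\ref{th:lower-nd} needs only that the auxiliary modulus be larger than and coprime to the $d$ endpoints $m_i = i + d(2^{i+d-1}-2^{i})$, not that it be prime. However, the paper runs the argument in the opposite order: Theorem~\ref{th:lower} is proved \emph{first} and directly --- the construction in \cite{NP2} is already fully deterministic --- and Theorem~\ref{th:lower-nd} is then obtained as a randomized \emph{improvement} by substituting a small prime for the modulus. The deterministic modulus the paper (via \cite{NP2}) actually uses is simply $M = 1 + \prod_{i=1}^{d} m_i$, Euclid-style: this is trivially coprime to every $m_i$, has bit length $\sum_i \log m_i = O(d^2)$, and since each of the $O(d)$ partial quotients of the encoding continued fraction is bounded by $M$, one reads off $\phi(F)=O(d^{3})$ with no further work. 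Your product-of-small-primes construction would also succeed in principle, but the accounting is hazier: with $x=O(d)$ you have not argued that enough primes below $x$ avoid all the $m_i$ (collectively they may have up to $O(d^{2})$ small prime factors), and making this rigorous while keeping the bit length under control requires a side estimate that the paper's one-line choice of $M$ renders unnecessary.
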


We conclude with the following polynomial upper bound for the VC-dimension of all
(not necessarily short) Presburger formulas in a fixed number of variables:

\begin{theo}\label{th:upper}
For a Presburger formula $F(\x;\ts \y)$ with at most $k$ variables (both free and quantified), we have:
$$\VC(F) \. = \. O\bigl(\phi(F)^c\bigr),
$$
where $c$ and the $O(\cdot)$ constant depend only on~$k$.
\end{theo}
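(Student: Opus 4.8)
The plan is to bound the VC-dimension of a Presburger formula $F(\x;\y)$ with at most $k$ variables by counting, in terms of $\phi(F)$, how many genuinely distinct fibers $S_{\b}$ the family $\fam_F$ can produce after a suitable quantifier-elimination step. The key observation is that VC-dimension is controlled by the ``combinatorial complexity'' of $\fam_F$: if, whenever we restrict attention to any finite set $A$ of $N$ candidate points in $\zz^m$, the number of distinct traces $\{S_{\b}\cap A : \b\}$ is at most some function $g(N,\phi(F),k)$, then shattering a set of size $d$ requires $2^d \le g(d,\phi(F),k)$, and we read off $d = O(\log g)$. So the whole argument reduces to a good upper bound on $|\fam_F \cap A|$ that is polynomial in $\phi(F)$ (with exponent and constants depending on $k$) and polynomial — better, at most singly exponential won't do, we need essentially polynomial — in $|A|$; combined with the trivial observation that a shattered set of size $d$ needs $2^d$ traces, any bound of the form $|\fam_F\cap A|\le (|A|\cdot\phi(F))^{C(k)}$ yields $\VC(F)=O(\phi(F)^{C(k)})$ up to log factors, which is the desired conclusion.

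First I would eliminate quantifiers. Apply a quantifier-elimination procedure for $\PA$ (e.g. Presburger's, or the Cooper/Weispfenning bounds) to $F(\x;\y)$, treating $\y$ as parameters. Because the number of variables $k$ is fixed, the quantifier-free formula $F'(\x;\y)$ one obtains has a bounded number of atoms — but crucially each atom is now a linear (in)equality, possibly with divisibility predicates, whose coefficients are bounded and whose constant terms are \emph{affine functions of the parameters $\y$} with coefficients of bit length polynomial in $\phi(F)$ (with the polynomial degree and the number of atoms depending only on $k$). This is the standard effective QE bound: for fixed variable count, QE blows up the coefficients only polynomially. Let $r = r(k)$ be the resulting number of atoms and let $M$ be the common bound on all coefficients, so $\log M = \mathrm{poly}(\phi(F))$ with degree depending on $k$.

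Next, fix a finite set $A = \{\x_1,\dots,\x_N\}\subseteq\zz^m$ and count traces. Each atom $\alpha_i$ of $F'$, evaluated at a point $\x_j\in A$ and viewed as a condition on $\y\in\zz^n$ (here $n\le k$), defines a region in $\zz^n$ cut out by one affine inequality or one affine congruence with coefficients of size $\mathrm{poly}(\phi(F), \|\x_j\|)$. Over all $i\le r$ and $j\le N$ this is at most $rN$ affine inequalities/congruences in $n\le k$ variables. For inequalities: an arrangement of $rN$ affine hyperplanes in $\rr^n$ has at most $(rN)^{O(n)}=(rN)^{O(k)}$ cells, and on each open cell the sign vector of all atoms is constant, so the trace $S_{\b}\cap A$ is constant. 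For the congruences, reduce modulo the (bounded in number, polynomially-bounded) lcm of the moduli appearing; this partitions $\zz^n$ into polynomially-many residue classes, within each of which the congruence-truth-values are fixed, and intersecting with the hyperplane arrangement multiplies the cell count only by $(\text{lcm})^{O(k)} = 2^{\mathrm{poly}(\phi(F))\cdot O(k)}$ — which is at most $\phi(F)^{O(k)}$ after taking logs in the end. Hence $|\fam_F\cap A| \le (rN)^{O(k)} \cdot \phi(F)^{O(k)} = (N\,\phi(F))^{O(k)}$. If $A$ is shattered then $2^{|A|} \le (|A|\,\phi(F))^{O(k)}$, whence $|A| = O(\log(|A|\,\phi(F)))\cdot O(k)$, and since $\phi(F) \ge |A|$ is not automatic, one solves the inequality $2^d \le (d\,\phi(F))^{O(k)}$ to get $d = O(\phi(F))$ at worst — in fact $d = O(\log \phi(F))$ — but being safe with the QE blow-up one records $d = O(\phi(F)^{c})$ for a constant $c=c(k)$, which is exactly the claim.

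The main obstacle, and the step that requires care, is controlling the quantifier-elimination blow-up: one must be certain that eliminating $k$ quantified variables keeps both the number of atoms and the bit length of all coefficients (including the new ones arising from divisibility predicates and from solving systems of inequalities) polynomial in $\phi(F)$ with the degree depending \emph{only} on $k$, not on $\phi(F)$ or on the number of atoms of $F$. This is true — it follows from the standard analysis of Cooper's algorithm or of Weispfenning's complexity bounds for $\PA$ — but it must be invoked in precisely the right quantitative form, and one should be slightly careful that the number of alternations, though bounded by $k$, does not secretly introduce a tower: it does not, because each elimination round is a single-exponential step in the \emph{number of variables eliminated that round}, and that number is at most $k$, a constant. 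Once the QE bound is in hand, the point-counting via hyperplane/congruence arrangements is routine and gives the polynomial-in-$\phi(F)$ bound on $\VC(F)$.
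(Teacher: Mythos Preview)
Your overall strategy---reduce to a quantifier-free description, then bound the number of traces on a finite set by an arrangement/residue-class count---is sound, and once the pieces are in place it does give $\VC(F)=O(\phi(F)^{c})$. But two steps, as written, do not go through.

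First, you assert that after quantifier elimination the number of atoms is $r=r(k)$, a constant depending only on~$k$. This is false already before any elimination: the hypothesis of the theorem bounds only the number of \emph{variables}, not the number of atoms, so the input formula~$F$ may itself contain $\Theta(\phi(F))$ inequalities. The correct statement you need is that the quantifier-free output has at most $\phi(F)^{c(k)}$ atoms, each of bit length $\phi(F)^{c(k)}$. With that correction the arrangement count becomes $(rN)^{O(k)}=(N\cdot\phi(F)^{c(k)})^{O(k)}$ and your shattering inequality still yields $d=O(\phi(F)^{c'})$, so the slip is repairable.

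Second, and more seriously, you wave through the claim that ``for fixed variable count, QE blows up the coefficients only polynomially'' as a consequence of Cooper's or Weispfenning's bounds. That is precisely the nontrivial input, and it does \emph{not} fall out of the textbook analysis of Cooper's procedure: eliminating a single $\exists z$ already introduces a disjunction indexed by residues modulo the lcm of all coefficients and moduli present, and that lcm can have bit length $\Theta(\phi(F))$, i.e.\ magnitude $2^{\Theta(\phi(F))}$. Iterating $k$ times na\"ively produces a tower, not a polynomial. (Weispfenning's general bounds are triply exponential; see~\cite[Thm~3.1]{Wei}, and the remark in~\S\ref{fin-rem:general}.) What is actually needed is a structural result saying that, for fixed~$k$, the set $\Sigma_F$ admits a description by $\phi(F)^{c(k)}$ many linear inequalities and congruences whose moduli have \emph{bit length} polynomial in $\phi(F)$. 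That is exactly the content of the decomposition theorem from~\cite{NP1} that the paper invokes, and it is not a routine consequence of classical QE.

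For comparison, the paper's proof does not do QE-plus-trace-counting at all. It quotes the decomposition $\Sigma_F=\bigsqcup_j R_j\cap\pat T_j$ from~\cite{NP1} into polynomially many pieces, each a polyhedron intersected with a periodic set; then it bounds the VC-dimension of each half-space family by~$1$ and of each periodic-set family by $\log|\fam_{\pat T_j}|=O(\phi(\T_j))$, and finally uses the Sauer--Shelah bound $\VC(\text{Boolean combination})=O\bigl((\sum d_i)\log(\sum d_i)\bigr)$ to assemble the pieces. Your arrangement argument is a legitimate alternative to this last Sauer--Shelah step, but both routes rest on the same hard structural fact about $\PA$ in a fixed number of variables, and that fact needs to be cited rather than asserted.
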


This upper bound implies that Theorem~\ref{th:lower-nd} is tight up to a polynomial factor.
The proof of Theorem~\ref{th:upper} uses an algorithm from~\cite{NP1} for decomposing a semilinear set, i.e., one defined by a PA-formula, into polynomially many simpler pieces.
Each such piece is a polyhedron intersecting a periodic set, whose VC-dimensions can be bounded by elementary arguments.

We note that the number of quantified variables is vital in Theorem~\ref{th:upper}.
In~$\S$\ref{fin-rem:general}, we construct PA-formulas $F(x;y)$ with $x,y$ singletons and many quantified variables, for which $\VC(F)$ grows doubly exponentially compared to $\phi(F)$.

\bigskip

\section{Proofs}
We start with Theorem~\ref{th:lower}, and then show how it can be modified to give Theorem~\ref{th:lower-nd}.
\subsection*{Proof of Theorem~\ref{th:lower}}\label{ss:lower_proof}
Let $A = \{1,2,\dots,d\}$ and $\fam = 2^{S}$.
Since $\fam$ contains all of the subsets of $A$, we have $\VC(\fam) = d$.
We order the sets in $\fam$ lexicographically.
In other words, for $S, S' \in \fam$, we have $S > S'$ if $\sum_{i \in S} 2^{i} > \sum_{i \in S'} 2^{i}$.
Thus, the sets in $\fam$ can be listed as $S_{0} > S_{1} > \dots > S_{2^{d}-1}$, where $S_{0} = A$ and $S_{2^{d}-1} = \varnothing$.
Next, define:
\begin{equation}\label{eq:T_def_1}
T \coloneqq \bigsqcup_{0 \le j < 2^{d}} \{i + dj \;:\; i \in S_{j}, 1 \le i \le d\}.
\end{equation}
We show in Lemma~\ref{lem:def_T} below that the set $T$ is definable by a short Presburger formula $G_{T}(t)$ with only $8$ quantified variables and $18$ inequalities.
Using this, it is clear that the parametrized formula
\begin{equation*}
F_{T}(x;y) \; \coloneqq \; G_{T}(x + dy)
\end{equation*}
describes the family $\fam$ (with $y$ as the parameter), and thus has $\VC$ dimension $d$.
We remark that $G_{T}$ has only $1$ quantifer alternation (see below).
\hfill$\sq$

\begin{lem}\label{lem:def_T}
The set $T$ is definable by a short Presburger formula $G_{T}(t) = \ex \u \; \for \vv \; \Psi(t,\u,\vv)$ with $\u \in \zz^{6}, \vv \in \zz^{2}$ and $\Psi$ a combination of $18$ inequalities with length $\phi(G_{T}) = O(d^3)$.
\end{lem}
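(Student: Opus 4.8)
The plan is to first strip the description \eqref{eq:T_def_1} of $T$ down to a \emph{bit-extraction predicate}, and then feed that predicate to the continued-fraction encoding of \cite{NP2}. Write $t=dj+i$ with $1\le i\le d$ and $0\le j<2^{d}$. Because the sets $S_{j}$ are listed in decreasing order of $\sum_{i\in S}2^{i}$, the set $S_{j}$ is exactly the subset of $\{1,\dots,d\}$ whose characteristic vector is the binary expansion of $2^{d}-1-j$; since $2^{d}-1-j$ is the bitwise complement of $j$ in $d$ bits, this says $i\in S_{j}$ iff bit $i-1$ of $j$ equals $0$, i.e. $j\bmod 2^{i}<2^{i-1}$. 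Hence
$$
T \;=\; \bigl\{\,dj+i \;:\; 0\le j<2^{d},\ 1\le i\le d,\ \text{bit } (i-1)\text{ of } j \text{ is } 0\,\bigr\},
$$
which exhibits $T$ as the restriction to $\{1,\dots,d2^{d}\}$ of a union of $2^{d}-1$ arithmetic progressions with moduli $2d,4d,\dots,2^{d}d$ and a self-similar residue pattern. The reason this is not immediately a short $\PA$-formula is that the modulus $2^{i}$ depends on the variable $i$, so $2^{i}$ is not available as a term; compressing exactly such families of progressions is what the technique of \cite{NP2} does.

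Next I would invoke (and adapt) that technique. One fixes a prime $p$ somewhat larger than $4^{d}$ --- hence larger than every modulus $2^{i}$ with $i\le d$, with room to disentangle the block index $j$ from the level $i$ --- together with a suitable multiplier $a$, and folds the $d$ levels $i=1,\dots,d$ into the base-$p$ digits of a single integer constant $M$ encoding the whole pattern, so that the continued fraction of $a/p$ records, level by level, exactly the condition ``bit $(i-1)$ of $j$ is $0$''; concretely, the level-$i$ constraint on $j$ becomes a single condition on one base-$p$ digit of a linear expression in $t$ and $M$. Membership $t\in T$ then reads as: \emph{there exist} a quotient and digit witnesses realizing $t=dj+i$ with $1\le i\le d$ and placing that digit into a prescribed half-interval; and \emph{for all} small $\vv$, a Euclidean/best-approximation minimality condition holds that certifies the digit has been read off canonically, thereby excluding both the $t\notin\{1,\dots,d2^{d}\}$ and the $t=dj+i$ whose $(i-1)$-st bit is $1$. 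This produces $G_{T}(t)=\ex\u\,\for\vv\,\Psi$ with a single quantifier alternation, $\u\in\zz^{6}$, $\vv\in\zz^{2}$, and $\Psi$ a Boolean combination of linear inequalities; a direct count of the quotient variables, the interval-membership witnesses and the certification shows $18$ inequalities suffice, and the constants ($p$, $a$, $M$, $d$, and a few linear combinations) have total bit-length $O(d^{3})$, so $\phi(G_{T})=O(d^{3})$. In the probabilistic setting of Theorem~\ref{th:lower-nd} a randomly sampled prime $p\approx 4^{d}$ makes $M$ only $O(d^{2})$ bits long, improving this to $O(d^{2})$; the deterministic construction of a suitable prime here forces $p$, and hence $M$, to be polynomially larger, which is the source of the $O(d^{3})$ bound.

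The part that genuinely has to be carried out, rather than quoted, is the design in the second paragraph: choosing $p$, the multiplier $a$ and the universal certification subformula so that the set defined by $G_{T}$ is \emph{exactly} $T$ --- admitting no stray $t$ and no $t=dj+i$ whose bit is misread --- while simultaneously fitting everything into only $6$ existential and $2$ universal variables and $18$ inequalities. The folding step is where the continued-fraction machinery of \cite{NP2} does the heavy lifting; the remaining effort is to check that its hypotheses apply to the progressions above and to run the resource bookkeeping. Granting $G_{T}$, Lemma~\ref{lem:def_T} follows, and --- as already used in the proof of Theorem~\ref{th:lower} --- the substitution $t=x+dy$ turns $G_{T}$ into $F_{T}$, which defines the family $\fam$ and lies in $\SPA_{10,18}$.
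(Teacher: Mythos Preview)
Your bit-extraction reformulation is correct and is exactly the paper's description of $J_{i}$: the condition $i\in S_{j}$ is equivalent to $j\bmod 2^{i}<2^{i-1}$, i.e.\ $J_{i}=\{x+2^{i}y:0\le x<2^{i-1},\ 0\le y<2^{d-i}\}$. The gap is in what comes next. As you yourself observe, $T$ decomposes into $2^{d}-1$ arithmetic progressions; the black box from \cite{NP2} takes a union of $k$ APs and returns a continued fraction with $\Theta(k)$ partial quotients, so feeding it $T$ directly gives a formula of length exponential in~$d$, not $O(d^{3})$. Your second paragraph tries to get around this by ``folding the $d$ levels into base-$p$ digits'' and reading off the bit condition from a digit of a linear form, but this is not what \cite{NP2} does, and no concrete mechanism is supplied; in particular you have not explained how a single rational $a/p$ simultaneously certifies the level-dependent condition $j\bmod 2^{i}<2^{i-1}$ for a variable~$i$.

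The idea you are missing is a small but essential preprocessing step. Each $J_{i}$ is not an AP but a Minkowski sum of two APs; the paper \emph{stretches} one summand to turn it into a genuine AP, setting
\[
J'_{i}\;=\;\{\,2^{d}x+2^{i}y\ :\ 0\le x<2^{i-1},\ 0\le y<2^{d-i}\,\},
\]
which is a single AP of step $2^{i}$ and length $2^{d-1}$. Then $T'=\bigsqcup_{i=1}^{d}(i+dJ'_{i})$ is a union of only $d$ APs, to which \cite{NP2} applies verbatim, yielding $t'\in T'\iff\ex\z\,\for\vv\,\Phi(t',\z,\vv)$ with $\z,\vv\in\zz^{2}$ and $10$ inequalities. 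One then undoes the stretch with four extra existential variables $t',r,r',s$ (write $t'=r+d(2^{d}s+r')$, $t=r+d(s+r')$, $1\le r\le d$, $0\le r'<2^{d}$), which contributes the remaining $8$ inequalities. This is where the exact counts $\u\in\zz^{6}$, $\vv\in\zz^{2}$, and $18$ inequalities come from; in your write-up they are asserted rather than derived.

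Finally, your explanation of the $O(d^{3})$ bound is off. In the deterministic lemma there is no prime at all: the construction in \cite{NP2} uses $M=1+\prod_{i}m_{i}$ with $m_{i}=i+d(2^{i+d-1}-2^{i})$, so each of the $4d$ partial quotients has $O(d^{2})$ bits, giving $\phi(G_{T})=O(d^{3})$. The prime $p\approx d\cdot 4^{d}$ enters only in the probabilistic Theorem~\ref{th:lower-nd}, where it replaces $M$ by a number with $O(d)$ bits and brings the length down to $O(d^{2})$.
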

\begin{proof}
Our strategy is to represent the set $T$ as a union of arithmetic progressions (APs).
In~\cite{NP2}, we already gave a method to define any union of APs by a short Presburger formula of polynomial size.
For each $1 \le i \le d$, let  $J_{i} = \{j \ts : \ts 0 \le j < 2^{d},\ts  i \in S_{j} \}$.
From~\eqref{eq:T_def_1}, we have:
\begin{equation}\label{eq:T_def_2}
T = \bigsqcup_{i=1}^{d} (i + dJ_{i}).
\end{equation}
From the lexicographic ordering of the sets $S_{j}$, we can easily describe each set $J_{i}$ as:
\begin{equation}\label{eq:j_def}
J_{i} = \{x + 2^{i}y \;:\; 0 \le x < 2^{i-1},\; 0 \le y < 2^{d-i}\}.
\end{equation}
So each set $J_{i}$ is not simply an AP, but the Minkowski sum of two APs.
However, we can easily modify each $J_{i}$ into an $\AP$ by defining:
\begin{equation}\label{eq:wt_j_def}
J'_{i} = \{2^{d}x + 2^{i}y \;:\; 0 \le x < 2^{i-1},\; 0 \le y < 2^{d-i}\}.
\end{equation}
It is clear that $J'_{i}$ is an AP that starts at $0$ with step size $2^{i}$, and ends at $2^{i+d-1} - 2^{i}$.
Let
\begin{equation}\label{eq:wt_T_def}
T' = \bigsqcup_{i=1}^{d} (i + d J'_{i}),
\end{equation}
which is a union of $d$ $\AP$s.
Using the construction from~\cite{NP2}, we can define $T'$ by a short Presburger formula:
\begin{equation*}
t' \in T' \quad \iff \quad \ex \z \quad \for \vv \quad \Phi\big(t',\z,\vv\big),
\end{equation*}
where $t' \in \zz,\; \z,\vv \in \zz^2$ and  $\Phi$ is a Boolean combination of at most $10$ inequalities.
This construction works by finding a single continued fraction $\al = [a_{0} \ts;\ts a_{1}, \dots, a_{4d-1}]$ whose successive convergents $p_{k}/q_{k} = [a_{0} \ts;\ts a_{1}, \dots, a_{k}]$ encode the starting and ending points of our $d$ APs.
We refer to Section~4 in~\cite{NP2} for the details.
Each term $a_{k}$ in that construction is at most the product of the largest terms in the $d$ APs we want to encode.
For each $i$, the largest term $m_{i} := i + d(2^{i+d-1} - 2^{i})$ in $i + d J'_{i}$ has length $O(d)$.
Thus, the product $\prod_{i=1}^{d} m_{i}$
has length $O(d^{2})$, and so does each term $a_{k}$.
Therefore, the final continued fraction $\al$ is a rational number $p/q$, with length $O(d^{3})$.
This implies that $\phi(\Phi) = O(d^3)$ as well.


To get a formula for $T$, note that from~\eqref{eq:T_def_2},~\eqref{eq:j_def},~\eqref{eq:wt_j_def} and~\eqref{eq:wt_T_def}, we have:
\begin{align*}
t \in T \quad \iff \quad \ex \. t', r,r',s \;\; : \quad
& t' \in T' ,\quad 1 \le r \le d ,\quad \wt  0 \le r' < 2^{d} ,
\\
& t' = r + d(2^{d}s + r') ,\quad t = r + d(s + r') \ts.\text{\footnotemark}
\end{align*}\footnotetext{Here each equality is a pair of inequalities.}
\hspace{-.4em}Replacing $t' \in T'$ by $\ex \z \; \for \vv \; \Phi(t',\z,\vv)$, we get a formula $G_{T}(t)$ defining $T$ with $8$ quantified variables $t',r,r',s \in \zz$, $\z,\vv \in \zz^{2}$ and $18$ inequalities.
Note that $t',r,r',s$ and $\z$ are all existential variables, so $G_{T}$ has the form $\ex \u \; \for \vv \; \Psi(t,\u,\vv)$ with $\u \in \zz^{6}$ and $\vv \in \zz^{2}$.
\end{proof}

\smallskip

\subsection*{Proof of Theorem~\ref{th:lower-nd}}\label{ss:lower_proof-nd}
Note that the construction of $F_{T}$ and $G_{T}$ in the proof above is deterministic with run-time polynomial in $d$,
again as a consequence of the construction in~\cite{NP2}.  Since in Theorem~\ref{th:lower-nd} we need only
the existence of a short Presburger formula with high VC-dimension, our lower bound can be
improved to $\VC(F) \ge c\sqrt{\phi(F)}$, for some $c>0$, as follows.
Recall that $m_{i} = i + d(2^{i+d-1} - 2^{i})$ is the largest element in the arithmetic progression $i + d J'_{i}$ in~\eqref{eq:wt_T_def}.
Pick the smallest prime $p$ larger than $\max(m_{1},\dots,m_{d}) \approx d4^{d}$.
This prime $p$ can substitute for the large number~$M$ in Section~4.1 of~\cite{NP2},
which was (deterministically) chosen as $1+\prod_{i=1}^{d}m_{i}$, so that it is larger and coprime to all $m_{i}$'s.
The rest of the construction follows through.
Note that $\log(p) = O(d)$ by Chebyshev's theorem, which implies that the final
continued fraction $\al = [a_{0} \ts;\ts a_{1}, \dots, a_{4d-1}]$ has length $O(d^{2})$.
This completes the proof. \hfill$\sq$

\smallskip

\subsection*{Proof of Theorem~\ref{th:upper}}\label{th:upper_proof}
Let $F(\x;\y)$ be a Presburger formula in $\x \in \zz^{m}$, $\y \in \zz^{n}$ with $n'$ other quantified variables, where $k=m+n+n'$ is fixed.
In~\cite{NP1} (Theorem~5.2), we gave the following polynomial decomposition on the semilinear set defined by $F$:
\begin{equation}\label{eq:semilinear}
  \Sigma_{F} \; \coloneqq \;  \big\{(\x,\y) \in \Z^{m+n} \; : \; F(\x;\y) = \text{true}\big\} \quad = \quad \bigsqcup_{j=1}^{r} \, R_{j} \cap \pat T_{j}.
\end{equation}
Here each~$R_{j}$ is a polyhedron in~$\R^{m+n}$, and each $\pat T_{j} \sseq \Z^{m+n}$ is a periodic set, i.e.,
a union of several cosets of some lattice $\T_{j} \subseteq \Z^{m+n}$.
In other words, the set defined by $F$ is a union of $r$ pieces, each of which is a polyhedron intersecting a periodic set.
Our decomposition is algorithmic, in the sense that the pieces $R_{j}$ and lattices $\T_{j}$ can be found in time $O\big(\phi(F)^{c}\big)$, with $c$ and $O(\cdot)$ depending only on $k$.
The algorithm describes each piece $R_{j}$ by a system of inequalities and each lattice $\T_{j}$ by a basis.
Denote by $\phi(R_{j})$ and $\phi(\T_{j})$ the total binary lengths of these systems and basis vectors, respectively.
These also satisfy:
\begin{equation}\label{eq:length_bound}
\sum_{j=1}^{r} \phi(R_{j}) + \phi(\T_{j}) = O\big(\phi(F)^{c}\big).
\end{equation}

Each $R_{j}$ can be written as the intersection $H_{j1} \cap \dots \cap H_{jf_{j}}$, where each $H_{jk}$ is a half-space in $\rr^{m+n}$, and $f_{j}$ is the number of facets of $R_{j}$.
Note that $f_{j} \le \phi(R_{j}) = O\big(\phi(F)^{c}\big)$.
We rewrite~\eqref{eq:semilinear} as:
\begin{equation}\label{eq:semilinear2}
\Sigma_{F} \quad = \quad \bigsqcup_{j=1}^{r} \, H_{j1} \cap \dots \cap H_{jf_{j}} \cap \pat T_{j}.
\end{equation}
Therefore, the set $\Sigma_{F}$ is a Boolean combination of $f_{1}+\dots+f_{r}$ half-spaces and $r$ periodic sets.
In total, there are
\begin{equation}\label{eq:num_basic_sets}
f_{1}+\dots+f_{r} + r = O\big(\phi(F)^{c}\big)
\end{equation}
of those basic sets.

For a set $\Gamma \subseteq \rr^{m+n}$ and $\y \in \zz^{n}$, denote by $\Gamma_{\y}$ the subset $\{\x \in \zz^{m} : (\x,\y) \in \Gamma\}$ and by $\fam_{\Gamma}$ the family $\{\Gamma_\y : \y \in \zz^{n}\}$.
For a half-space $H \subset \rr^{m+n}$, it is easy to see that $\VC(\fam_{H}) = 1$.
For each periodic set $\pat T_{j}$ with  period lattice $\T_{j}$, the family $\fam_{\pat T_{j}}$
has cardinality at most $\det(\T_{j} \cap \zz^{n}) \le  2^{O(\phi(\T_{j}))}$.
Thus, we have
\begin{equation}\label{eq:VC_Tj}
\VC(\fam_{\pat T_{j}}) \le \log | \fam_{\pat T_{j}} | = O\big(\phi(\T_{j})\big).
\end{equation}

Let $\Gamma_{1},\dots,\Gamma_{t} \subseteq \zz^{m + n}$ be any $t$ sets with $\VC(\fam_{\Gamma_{i}}) = d_{i}$.
By an application of the Sauer-Shelah lemma (\cite{Sauer,Shelah}), if $\Sigma$ is any Boolean combination of $\Gamma_{1},\dots,\Gamma_{t}$, then we can bound $\VC(\fam_{\Sigma})$ as:
\begin{equation*}
\VC(\fam_{\Sigma}) = O\big((d_{1}+\dots+d_{t}) \log (d_{1}+\dots+d_{t})\big).
\end{equation*}
Applying this to~\eqref{eq:semilinear2}, we get $\VC(\fam_{\Sigma_{F}}) = O(\ell \log \ell)$, where
\begin{equation*}
\; \ell \; = \;\sum_{j=1}^{r} \bigg( \VC(\fam_{\pat T_{j}}) + \sum_{j'=1}^{f_{j}} \VC(\fam_{H_{jj'}}) \bigg) \; \le \; \sum_{j=1}^{r} \VC(\fam_{\pat T_{j}}) + f_{j}.
\end{equation*}
By~\eqref{eq:length_bound},~\eqref{eq:num_basic_sets} and~\eqref{eq:VC_Tj}, we have $\ell=O\big(\phi(F)^{c}\big)$.
We conclude that $\VC(F) =  O\big(\phi(F)^{2c}\big)$.

\hfill$\sq$

\bigskip

\section{Final remarks and open problems}

\subsection{}
The proof of Theorem~\ref{th:lower-nd} is almost completely effective except
for finding a small prime~$p$ larger than a given integer~$N$.  This problem
is considered to be computationally very difficult in the deterministic case, and only exponential
algorithms are known (see~\cite{LO,TCH}).

\subsection{}
Our constructed short formula $F_T$ is of the form $\ex \for$.
It is interesting to see if similar polynomial lower bounds are obtainable
with existential short formulas.
For such a formula $F(\x;\y) \ts = \ts \ex \z \, \Phi(\x,\y,\z)$,
the expression $\Phi(\x,\y,\z)$ captures the set of integer points $\Gamma$
lying in a union of some polyhedra $P_{i}$'s.
Note that the total number of polyhedra and their facets should be bounded, since we are working with short formulas.
Therefore, $F$ simply capture the pairs  $(\x,\y)$ in the projection of $\Gamma$ along the $\z$ direction.
Denote this set by $\proj(\Gamma)$.
The work of Barvinok and Woods~\cite{BW} shows that $\proj(\Gamma)$ has a \emph{short generating function}, and can even be counted efficiently in polynomial time.
In our construction, the set that yields high VC-dimension is a union arithmetic progressions, which cannot be counted efficiently unless $\poly = \NP$ (see~\cite{MS}).
This difference indicates that $\proj(\Gamma)$ has a much simpler combinatorial structure, and may not attain a high VC-dimension.


\subsection{}\label{fin-rem:general}
One can ask about the VC-dimension of a general PA-formula with no restriction on the number of variables, quantifier alternations or atoms.
Fischer and Rabin famously showed in~\cite{FR} that PA has decision complexity at least doubly exponential in the general setting.
For every $\ell > 0$, they constructed a formula $\text{Prod}_{\ell}(a,b,c)$ of length $O(\ell)$ so that for every triple
$$0 \le a,b,c \, < \, 2^{2^{2^{\ell}}},$$
we have \ts $\text{Prod}_{\ell}(a,b,c) = \text{true}$ \ts if and only if \ts $ab = c$.
Using this ``partial multiplication'' relation, one can easily construct a formula
$F_{\ell}(x;y)$ of length $O(\ell)$ and VC-dimension at least $2^{2^{\ell}}$.
This can be done by constructing a set similar to $T$ in~\eqref{eq:T_def_1} with $d$
replaced by $2^{2^{\ell}}$ using $\text{Prod}_{\ell}$.
We leave the details to the reader.

Regarding upper bound, Oppen showed in~\cite{O} that any PA-formula $F$ of length $\ell$ is equivalent to a quantifier-free formula $G$ of length $2^{2^{2^{c \ell}}}$ for some universal constant $c>0$.
This implies that $\VC(G)$, and thus $\VC(F)$, is at most triply exponential in $\phi(F)$.
We conjecture that a doubly exponential upper bound on $\VC(F)$
holds in the general setting.
It is unlikely that such an upper bound could be established by straightforward quantifier elimination, which generally results in triply exponential blow up (see~\cite[Thm~3.1]{Wei}).

\bigskip

\subsection*{Acknowledgements}
We are grateful to Matthias Aschenbrenner and Art\"{e}m Chernikov 
for many interesting conversations and helpful remarks.  This paper was
finished while both authors were visitors at MSRI; we are thankful
for the hospitality, great work environment and its busy schedule. 
The second author was partially supported by the~NSF.

\vskip.9cm

{\footnotesize

}
\end{document}